\theoremstyle{definition}
\newtheorem{theorem}{Theorem}[section]
\newtheorem{lemma}[theorem]{Lemma}
\newtheorem{corollary}[theorem]{Corollary}
\newtheorem{remark}[theorem]{Remark}
\newtheorem{definition}[theorem]{Definition}
\title{An analogue of the Robin inequality of the second type for odd integers}
\date{7 March, 2022}
\author{Yoshihiro Koya,\\
Yokohama City University}
\begin{document}
\maketitle

\begin{abstract}
  In this paper we give a variant of the Robin inequality which states that
$\frac{\sigma(n)}{n} \leq \frac{e^\gamma}{2} \log\log n
  + \frac{0.7398\cdots}{\log\log n}$
  for any odd integer $n \geq 3$.
\end{abstract}

\section{Introduction}

The Riemann zeta function is defined by
\begin{equation*}
  \zeta(s) = \sum^\infty_{n=1} \dfrac{1}{n^s} \qquad (\mathrm{Re}(s) > 1),
\end{equation*}
and it is analytically continued to the whole complex plane.
The Riemann zeta function has two kinds of zeros.
The first are called trivial zeros.
They are zeros at $-2, -4, \ldots, -2n, \ldots$.
On the other hand it is well-known that
the Riemann zeta function has infinite number of zeros
on the line $\mathrm{Re}(s) = \frac{1}{2}$ (\cite{Titch}).
They are called complex zeros.

The Riemann hypothesis states that
all complex zeros of the Riemann zeta function lie on the line
$\mathrm{Re}(s) = \frac{1}{2}$.
Many mathematicians have made attempts to solve the Riemann hypothesis.
And several criteria which are equivalent to the Riemann hypothesis were
obtained by some authors.
One of such a criterion is the Robin inequality.

In \cite{robin}, G.~Robin showed the following theorem~:

\begin{theorem}[Theorem~1 of \cite{robin}]
  The Riemann hypothesis equivalent
  to the statement that
  \begin{equation*}
    \dfrac{\sigma(n)}{n} < e^\gamma \log\log n 
  \end{equation*}
  for any integer $n> 5040$,
  where $\sigma(n)$ stands for the sum of divisors of $n$ and
  $\gamma$ is the Euler-Mascheroni constant.
\end{theorem}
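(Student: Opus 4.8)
The plan is to prove the equivalence by treating the two implications separately, in each case reducing the inequality to the behaviour of $\sigma(n)/n$ along the \emph{colossally abundant} numbers, which are the champions for the ratio $\sigma(n)/n$ relative to the size of $n$. Writing $f(n)=\sigma(n)/(n\log\log n)$, I would first recall Gronwall's theorem that $\limsup_{n\to\infty}f(n)=e^{\gamma}$, so that the real content is the \emph{effective, uniform} assertion that $f(n)<e^{\gamma}$ holds for every $n>5040$, and not merely in the limit. The reduction step is that it suffices to test the inequality on colossally abundant $N=\prod_{p\le x}p^{a_{p}}$: for such $N$ the elementary bound $\sigma(p^{a})/p^{a}<(1-1/p)^{-1}$ gives $\sigma(N)/N\le P(x):=\prod_{p\le x}(1-1/p)^{-1}$, while $\log N=\sum_{p\le x}a_{p}\log p\ge\theta(x):=\sum_{p\le x}\log p$. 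Hence $f(N)\le P(x)/\log\theta(x)$, and the whole problem is transferred to comparing the Mertens product $P(x)$ with $e^{\gamma}\log\theta(x)$.

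For the direction assuming the Riemann hypothesis, I would use Mertens' third theorem in its refined form, $\log P(x)=\gamma+\log\log x+\epsilon(x)$, and express the error $\epsilon(x)$ through the Chebyshev function by partial summation, so that both $\epsilon(x)$ and the gap $\log x-\log\theta(x)$ are controlled by $\theta(x)-x$. Under the Riemann hypothesis one has $\theta(x)-x=O(\sqrt{x}\,\log^{2}x)$, which is small enough to force $P(x)<e^{\gamma}\log\theta(x)$ for all $x$ beyond a computable bound; combined with the reduction this yields $f(N)<e^{\gamma}$ for every colossally abundant $N$ past an explicit threshold, after which a finite numerical check pins the cutoff at exactly $5040$.

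For the converse I would argue by contraposition. If the Riemann hypothesis fails, let $\Theta=\sup\{\operatorname{Re}\rho\}>\tfrac12$ over the nontrivial zeros. The explicit formula together with Landau's oscillation theorem then forces $\theta(x)-x=\Omega_{\pm}(x^{\Theta-\varepsilon})$ for every $\varepsilon>0$, so the error $\epsilon(x)$ changes sign with an amplitude that cannot be absorbed; selecting colossally abundant numbers whose largest prime factor sits at the positive peaks of this oscillation produces $f(N)\ge e^{\gamma}$ for infinitely many $N$, contradicting the inequality.

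I expect the main obstacle to be the quantitative passage between $\theta(x)-x$ and the error $\epsilon(x)$ in the Mertens product, carried out with sufficiently explicit constants that (i) the Riemann-hypothesis bound yields a clean, effective threshold compatible with the value $5040$, and (ii) the oscillation in the converse genuinely crosses the level $e^{\gamma}$ rather than merely accumulating at it. Sharp partial-summation estimates and a strong form of the oscillation theorem are exactly where the delicate work lies.
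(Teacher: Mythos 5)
First, a point of context: the paper does not prove this statement at all --- it is quoted verbatim as Theorem~1 of \cite{robin} and used only as background motivation, so there is no in-paper proof to compare yours against; your sketch has to be measured against Robin's original argument. At the level of architecture you have reconstructed that argument correctly: reduce to colossally abundant numbers (Robin's Proposition~1 of Section~3, which reappears in this paper as Lemma~2.8(1)), prove the inequality for all sufficiently large colossally abundant $N$ under the Riemann hypothesis, finish with a finite verification down to $5040$, and obtain the converse from a Landau-type oscillation theorem in the spirit of Nicolas.

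There is, however, a genuine gap in your reduction step for the conditional direction. From $\sigma(N)/N\le P(x)$ and $\log N\ge\theta(x)$ you conclude $f(N)\le P(x)/\log\theta(x)$ and declare the problem ``transferred'' to proving $P(x)<e^{\gamma}\log\theta(x)$; you then assert that $\theta(x)-x=O(\sqrt{x}\log^{2}x)$ is ``small enough to force'' this. It is not. Under RH both $P(x)-e^{\gamma}\log x$ and $e^{\gamma}\bigl(\log\theta(x)-\log x\bigr)$ are of size about $\log^{2}x/\sqrt{x}$, and partial summation shows their leading oscillating parts are essentially \emph{equal} (both are $e^{\gamma}(\theta(x)-x)/x$ to first order), so they cancel in the difference $P(x)-e^{\gamma}\log\theta(x)$; the sign of what remains is governed by lower-order tail integrals of $\theta(t)-t$ whose sign the bound $|\theta(x)-x|\ll\sqrt{x}\log^{2}x$ does not control. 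In other words, the target inequality $P(x)<e^{\gamma}\log\theta(x)$ is not a consequence of the stated input and is quite possibly false for infinitely many $x$. Robin's actual proof avoids this by never discarding the colossally-abundant structure: he keeps the deficit factor $\prod_{p\le x}(1-p^{-a_{p}-1})$ in $\sigma(N)/N$ and the higher prime-power contributions to $\log N$, and he bounds the relevant error not via the crude estimate on $\theta(x)-x$ but via the explicit-formula combination $\sum_{\rho}x^{\rho-1}/\bigl(\rho(\rho-1)\bigr)$, for which RH gives the much sharper bound $(2+\gamma-\log 4\pi)/\sqrt{x}$ with the $\log^{2}x$ factor gone. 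Your converse is right in outline but equally needs quantification: Robin (following Nicolas) uses Landau's theorem to show the inequality fails by an explicit margin of the form $c\log\log n/\log^{b}n$ with $1-\Theta<b<\tfrac12$, not merely that $f(N)$ ``reaches'' $e^{\gamma}$ at peaks of the oscillation.
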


There are some variants of this criterion.
One of them is the Washington-Yang inequality.
That is, L.~C.~Washington and A.~Yang showed
the following theorem~:

\begin{theorem}[Theorem~1 of \cite{WYang}]
  The Riemann hypothesis is equivalent to the statement that
  \begin{equation*}
    \dfrac{\sigma(n)}{n} < \dfrac{e^\gamma}{2}\log\log n
  \end{equation*}
  for all odd integers $n \geq 3^4 \cdot 5^3 \cdot 7^2 \cdot 11 \cdot 13 \cdots 67
  = 18565284664427130919514350125$.
\end{theorem}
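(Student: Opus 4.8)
The plan is to prove the two implications separately, treating the inequality as an assertion about the odd analogue of colossally abundant numbers. Writing an odd integer as $n=\prod_{p\mid n} p^{a_p}$ with every $p$ odd, one has $\frac{\sigma(n)}{n}=\prod_{p\mid n}\frac{1-p^{-(a_p+1)}}{1-p^{-1}}$, so for each size the quantity $\frac{\sigma(n)}{n}/\log\log n$ is maximised by numbers supported on an initial segment of the \emph{odd} primes with non-increasing exponents, i.e. the odd analogues of superior highly composite / colossally abundant numbers. The constant $\frac{e^\gamma}{2}$ is then explained by the odd form of Mertens' theorem, namely $\prod_{3\le p\le x}\bigl(1-p^{-1}\bigr)^{-1}\sim \frac{e^\gamma}{2}\log x$, which is the classical product with the Euler factor $(1-\tfrac12)^{-1}=2$ at $p=2$ removed; this already yields $\limsup_{n\ \mathrm{odd}}\frac{\sigma(n)}{n\log\log n}=\frac{e^\gamma}{2}$.

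For the forward implication, assume RH. First I would take logarithms, writing $\log\frac{\sigma(n)}{n}=\sum_{p\mid n}\log\frac{1-p^{-(a_p+1)}}{1-p^{-1}}$, and bound this above by the full odd Euler product $\sum_{3\le p\le P}\log(1-p^{-1})^{-1}$, where $P$ is the largest prime factor of an extremal $n$. (A direct transfer from Robin's theorem via $\frac{\sigma(2^a n)}{2^a n}=(2-2^{-a})\frac{\sigma(n)}{n}$ is too lossy, so the odd product must be handled on its own.) Invoking Schoenfeld's explicit form of the prime number theorem under RH, i.e. $|\theta(x)-x|<\frac{1}{8\pi}\sqrt{x}(\log x)^2$ for $x\ge 599$, together with partial summation, I would convert this into $\frac{e^\gamma}{2}\log\log n$ plus an error that is negative for all sufficiently large $n$, proving the strict inequality beyond an effective bound. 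The remaining range is dispatched by a finite computation over the odd extremal numbers, which is exactly what pins the threshold to $N_0=3^4\cdot5^3\cdot7^2\cdot11\cdot13\cdots67$.

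For the converse I would argue by contraposition: if RH fails there is a zero $\rho=\beta+i\delta$ of $\zeta$ with $\beta>\frac12$, and by the classical oscillation theorems the error term $\theta(x)-x$ admits positive excursions of size $\gg x^{\beta-\varepsilon}$. The task is to transport this oscillation into the odd extremal sequence: along that sequence $\log\frac{\sigma(n)}{n}-\frac{e^\gamma}{2}\log\log n$ is governed, via an odd analogue of Nicolas' lemma, by a sum over zeros $\sum_\rho \frac{x^{\rho}}{\rho}$, so a zero off the critical line produces infinitely many odd $n$ violating the bound, contradicting the hypothesis that the inequality holds for all odd $n\ge N_0$.

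The main obstacle I expect is this converse, and specifically the step of carrying the zero-oscillation estimate through the \emph{odd} restriction: one must show that discarding the prime $2$ does not damp the oscillations, establish the odd analogue of Nicolas' lemma relating $\frac{\sigma(n)}{n}$ along the extremal sequence to $\sum_\rho x^\rho/\rho$, and keep every constant effective so that the finite verification in the forward direction closes exactly at $N_0$ rather than at some larger or smaller value. The Mertens asymptotics and the logarithmic reductions are routine by comparison.
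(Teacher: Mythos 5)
The paper itself offers no proof of this statement: it is quoted verbatim as Theorem~1 of \cite{WYang}, so the only comparison available is with the strategy of that source, which (like Robin's original argument) your forward direction essentially reproduces. That half of your plan is sound in outline: the constant $\frac{e^\gamma}{2}$ does come from the odd Mertens product $\prod_{3\le p\le x}(1-p^{-1})^{-1}\sim\frac{e^\gamma}{2}\log x$, the extremal integers are the odd colossally abundant numbers, and Schoenfeld's conditional bound on $\theta(x)$ plus a finite computation is how the threshold $3^4\cdot5^3\cdot7^2\cdot11\cdots67$ gets pinned down. One ingredient is missing even here: you assert that $\frac{\sigma(n)}{n}/\log\log n$ is \emph{maximised} on odd colossally abundant numbers, but since $\log\log n$ is not multiplicative this is not automatic; what is actually needed is an interpolation lemma of the type used in this paper (Lemma~\ref{lemma:partial}(1), following Robin) saying that if the inequality holds at two consecutive colossally abundant numbers then it holds for every odd integer in between. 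Without that lemma the reduction to the extremal sequence does not close.

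The genuine gap is the converse, and you have flagged it yourself: the ``odd analogue of Nicolas' lemma'' relating the extremal sequence to $\sum_\rho x^\rho/\rho$ is never established, so as written the backward implication is a programme, not a proof. It is also harder than necessary. The parity transfer $\frac{\sigma(2^a m)}{2^a m}=(2-2^{-a})\frac{\sigma(m)}{m}$ is lossy only in the forward direction; in the contrapositive it works in your favour. If RH fails, Robin's own converse produces infinitely many $n$ with $\frac{\sigma(n)}{n}\ge e^\gamma\log\log n+\frac{c\log\log n}{(\log n)^b}$ for some $b<\frac12$, $c>0$; writing $n=2^a m$ with $m$ odd gives $\frac{\sigma(m)}{m}\ge\frac12\frac{\sigma(n)}{n}\ge\frac{e^\gamma}{2}\log\log n+\frac{c\log\log n}{2(\log n)^b}>\frac{e^\gamma}{2}\log\log m$, since $\log\log m\le\log\log n$ and $2-2^{-a}\le 2$. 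This yields infinitely many odd counterexamples, hence one beyond any fixed threshold, and entirely sidesteps the odd oscillation machinery you identify as the main obstacle. I would restructure the converse around this transfer and concentrate the genuine work in the forward direction and the interpolation lemma.
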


On the other hand G.~Robin also proved the following theorem, which
holds independently of the Riemann hypothesis~:

\begin{theorem}[Theorem 2 of \cite{robin}]
  One has that
  \begin{equation*}
        \dfrac{\sigma(n)}{n} < e^\gamma \log\log n + \dfrac{0.6482 \cdots}{\log\log n}
  \end{equation*}
  for all $n \geq 3$ with equality for $n=12$.
\end{theorem}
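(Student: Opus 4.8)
The plan is to work with
\[
  h(n)=\left(\frac{\sigma(n)}{n}-e^{\gamma}\log\log n\right)\log\log n ,
\]
and to prove that $h(n)\le h(12)$ for every integer $n\ge 3$, with equality only at $n=12$. Since $\log\log n>0$ for $n\ge 3$ (because $3>e$), dividing by $\log\log n$ then returns the stated inequality, the constant $0.6482\cdots$ being by construction the value $h(12)$. It is convenient to set $B(x)=e^{\gamma}x+h(12)/x$, so that the goal reads $\sigma(n)/n\le B(\log\log n)$.

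First I would reduce to a structured, finite verification. The superabundant numbers---those $m$ with $\sigma(k)/k<\sigma(m)/m$ for all $k<m$---are exactly the left-to-right maxima of $\sigma(k)/k$, so $\sigma(n)/n\le\sigma(m)/m$ where $m$ is the largest superabundant number not exceeding $n$. The function $B$ is increasing for $x>\sqrt{h(12)/e^{\gamma}}=0.603\cdots$, hence for $\log\log n\ge\log\log 12$. Consequently, for $n\ge 12$ any violation $\sigma(n)/n>B(\log\log n)$ would force the same violation at the largest superabundant $m\le n$ (which satisfies $m\ge 12$, so $\log\log 12\le\log\log m\le\log\log n$ and $B(\log\log m)\le B(\log\log n)<\sigma(n)/n\le\sigma(m)/m$). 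It therefore suffices to verify the inequality on superabundant numbers $\ge 12$, together with a direct check of the finitely many $n$ with $3\le n\le 11$.

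For a superabundant number $n=\prod_{p\le P}p^{a_{p}}$ (with $a_{2}\ge a_{3}\ge\cdots\ge a_{P}\ge 1$ and $P$ its largest prime factor) I would use the exact formula
\[
  \frac{\sigma(n)}{n}=\prod_{p\le P}\frac{1-p^{-(a_{p}+1)}}{1-p^{-1}},
\]
together with effective prime-number estimates to produce a second-order expansion of $\sigma(n)/n$ in terms of $\log\log n$. The tools here are the explicit Rosser--Schoenfeld bounds: Mertens' third theorem with an explicit remainder, $\prod_{p\le P}(1-1/p)^{-1}\le e^{\gamma}\log P\,(1+O(1/\log^{2}P))$, and effective bounds on $\theta(P)=\sum_{p\le P}\log p$, combined with $\log n=\sum_{p\le P}a_{p}\log p\ge\theta(P)$ and the near-optimal choice of the exponents $a_{p}$ defining these numbers. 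Carrying the expansion to order $1/\log\log n$ should give $\sigma(n)/n\le e^{\gamma}\log\log n+(h(12)+o(1))/\log\log n$, so that the surplus $h(n)-h(12)$ is strictly negative once $P$, and hence $n$, exceeds an explicit threshold $N_{0}$.

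The main obstacle is precisely the second-order precision this requires. The gap $\sigma(n)/n-e^{\gamma}\log\log n$ is of size $1/\log\log n$, the very same order as the correction term $h(12)/\log\log n$ it must be compared against; the leading term of Mertens' theorem cancels entirely, and the crude bound $\sigma(n)/n<\prod_{p\le P}(1-1/p)^{-1}$ with an unrefined remainder is too lossy to conclude. One must therefore track the explicit Rosser--Schoenfeld error terms in both $\prod_{p\le P}(1-1/p)^{-1}$ and $\theta(P)$ simultaneously, and also exploit that superabundant exponents exceed $1$ on the small primes. Finally, $N_{0}$ must be kept small enough that the residual range can be settled by enumerating the (finitely many) superabundant numbers up to $N_{0}$ and evaluating $h$ on each; it is this tabulation that pins down the value $0.6482\cdots$ and exhibits $n=12$ as the unique maximizer.
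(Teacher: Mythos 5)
The paper does not actually prove this statement: it is quoted verbatim as Theorem~2 of Robin's paper \cite{robin} and used only as motivation, so there is no internal proof to compare against. Measured against Robin's original argument (and against the machinery this paper borrows for its odd analogue), your outline is the right one: reduce to the record-holders of $\sigma(n)/n$, prove the inequality for the large ones by explicit second-order Mertens/Chebyshev estimates, and finish with a finite tabulation that pins down the constant and identifies $n=12$ as extremal. Your reduction step is correct and in fact slightly more elementary than Robin's: he interpolates between consecutive colossally abundant numbers (the device reproduced in this paper as Lemma~\ref{lemma:partial}), whereas you use that the running maximum of $\sigma(k)/k$ is attained at a superabundant number, together with the monotonicity of $B(x)=e^{\gamma}x+h(12)/x$ for $x\ge\log\log 12>\sqrt{h(12)/e^{\gamma}}$. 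Both are sound; yours avoids the $\epsilon$-optimality of colossally abundant numbers at the price of a longer finite list to check.

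The gap is that the only genuinely hard step is asserted rather than carried out. The claim that the expansion ``should give'' $\sigma(n)/n\le e^{\gamma}\log\log n+(h(12)+o(1))/\log\log n$ for large superabundant $n$ is essentially the content of the theorem at infinity, and nothing in the proposal establishes it. You would need the explicit inequalities $\prod_{p\le x}(1-1/p)^{-1}\le e^{\gamma}\log x\,(1+c_{1}/\log^{2}x)$ and $\theta(x)>x\left(1-c_{2}/\log x\right)$ with concrete admissible constants (the even-integer analogues of Lemma~\ref{logprod} and Lemma~\ref{loglog} here), a bound on the contribution of the exponents $a_{p}\ge 2$ on small primes, and an actual comparison showing that the resulting upper bound for $\bigl(\sigma(n)/n-e^{\gamma}\log\log n\bigr)\log\log n$ falls below $h(12)=0.6482\cdots$ beyond an explicit threshold $N_{0}$, followed by the enumeration of superabundant numbers up to $N_{0}$. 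As you yourself note, the leading Mertens term cancels and the whole theorem lives in the error terms; until those constants are computed and shown compatible, the argument is a correct plan for Robin's proof rather than a proof.
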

We shall call the inequality of this type the Robin inequality of the second type in this paper.

In this paper, we prove the following theorem~:

\begin{theorem}[Theorem~\ref{theorem:main}]
  We have that
  \begin{equation*}
    \dfrac{\sigma(n)}{n} \leq \dfrac{e^\gamma}{2} \log\log n
    + \dfrac{0.7398\cdots}{\log\log n}
  \end{equation*}
  for odd integer $n \geq 3$.

  Moreover, if $n=315$, then equality holds.
\end{theorem}
Here the constant $0.7398\cdots$ is defined by the following~:
\begin{equation*}
  0.7398002037224377\cdots = \left( \dfrac{\sigma(315)}{315} - \dfrac{e^\gamma}{2} \log\log 315 \right)
  \times \log\log 315.
\end{equation*}

Note that this inequality can be considered as an analogue of the above Robin's second inequality
in the direction of Washington and Yang.

\section{Some Lemmas}

In this section we prepare some lemmas and a theorem, which are shown 
by T.~Oshiro (\cite{oshiro}).

Let $p_k$ be the $k$-th prime.  Thus we have $p_1 = 2, p_2 = 3, p_3 = 5, \ldots$.
Let $N_k = \prod^{k}_{i = 1} p_i$ be the $k$-th prime factorial and $N_k^\prime = \prod^{k}_{i = 2} p_i$ be the corresponding odd prime factorial.
By its definition we have $N_k = 2 N_k^\prime$.

\begin{lemma}[Lemma~2.5 of \cite{oshiro}] \label{lemma:phi}
  Let $n$ be an odd integer with $N_k^\prime \leq n < N_{k+1}^\prime$.
  Then we have
  \begin{equation*}
    \dfrac{n}{\varphi(n)} \leq \dfrac{N_k^\prime}{\varphi(N_k^\prime)}
  \end{equation*}
\end{lemma}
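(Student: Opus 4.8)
The plan is to exploit the fact that $\frac{n}{\varphi(n)}$ depends only on the \emph{set} of distinct prime divisors of $n$, through the standard multiplicative identity
\[
  \frac{n}{\varphi(n)} = \prod_{p \mid n} \frac{p}{p-1},
\]
and thereby reduce the statement to a comparison of products taken over primes. Writing the distinct odd prime divisors of $n$ as $q_1 < q_2 < \cdots < q_m$, we have $\frac{n}{\varphi(n)} = \prod_{j=1}^m \frac{q_j}{q_j-1}$, while the target quantity is $\frac{N_k^\prime}{\varphi(N_k^\prime)} = \prod_{i=2}^k \frac{p_i}{p_i-1}$, i.e.\ the product of $\frac{p}{p-1}$ over the $k-1$ smallest odd primes.

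First I would bound the number $m$ of distinct odd prime factors of $n$. Since $n$ is divisible by each of the distinct primes $q_1, \ldots, q_m$, we have $n \geq q_1 q_2 \cdots q_m$; and because the $m$ smallest odd primes are precisely $p_2, \ldots, p_{m+1}$, this gives $n \geq N_{m+1}^\prime$. Combining this with the hypothesis $n < N_{k+1}^\prime$ yields $N_{m+1}^\prime < N_{k+1}^\prime$, hence $m+1 \leq k$, that is, $m \leq k-1$.

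Next I would compare the two products term by term, using that the function $x \mapsto \frac{x}{x-1}$ is strictly decreasing for $x > 1$ and always exceeds $1$. Since $q_j$ is at least the $j$-th smallest odd prime, $q_j \geq p_{j+1}$, and therefore $\frac{q_j}{q_j-1} \leq \frac{p_{j+1}}{p_{j+1}-1}$. Multiplying over $j = 1, \ldots, m$ gives $\frac{n}{\varphi(n)} \leq \prod_{j=1}^m \frac{p_{j+1}}{p_{j+1}-1}$. Finally, since $m \leq k-1$ and each remaining factor $\frac{p_{j+1}}{p_{j+1}-1}$ exceeds $1$, extending the product up to $j = k-1$ only increases it, so that $\frac{n}{\varphi(n)} \leq \prod_{j=1}^{k-1} \frac{p_{j+1}}{p_{j+1}-1} = \prod_{i=2}^k \frac{p_i}{p_i-1} = \frac{N_k^\prime}{\varphi(N_k^\prime)}$, which is the desired inequality (with equality exactly when $n = N_k^\prime$).

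I do not expect a serious obstacle here, since the argument is in essence a monotonicity comparison. The one step that needs care is the counting: one must correctly account for the index shift caused by excluding $p_1 = 2$, so that the bound $n \geq N_{m+1}^\prime$ and the term-by-term estimate $q_j \geq p_{j+1}$ rest on consistent indexing of the odd primes. Handling the slack when $m < k-1$ — appending the extra factors to pass from a product of $m$ terms to one of $k-1$ terms — is routine once one observes that every such factor is strictly greater than $1$.
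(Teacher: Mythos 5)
Your proof is correct. Note that the paper itself does not prove this lemma at all --- it is quoted from the cited bachelor thesis \cite{oshiro} without an argument --- so there is no in-paper proof to compare against; your write-up in effect supplies the missing justification. The argument you give is the natural one and is complete: the identity $\frac{n}{\varphi(n)}=\prod_{p\mid n}\frac{p}{p-1}$, the count $m\leq k-1$ of distinct prime factors forced by $n<N_{k+1}^\prime$ via $n\geq q_1\cdots q_m\geq N_{m+1}^\prime$, the term-by-term bound $\frac{q_j}{q_j-1}\leq\frac{p_{j+1}}{p_{j+1}-1}$ from $q_j\geq p_{j+1}$, and the padding of the product with factors exceeding $1$ are all handled correctly, including the index shift caused by omitting $p_1=2$. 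The only cosmetic slip is the parenthetical claim that equality holds exactly when $n=N_k^\prime$: equality in $\frac{n}{\varphi(n)}=\frac{N_k^\prime}{\varphi(N_k^\prime)}$ holds whenever the set of prime divisors of $n$ is exactly $\{p_2,\dots,p_k\}$, e.g.\ for $n=3N_k^\prime$ when that still lies below $N_{k+1}^\prime$; since the lemma only asserts the inequality, this does not affect your proof.
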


\begin{lemma}[Lemma~2.2 of \cite{oshiro}] \label{logprod}
  Let $x \geq 10^4$. Then we have
  \begin{equation*}
    \prod_{i=2}^{k}\left(\dfrac{p_i}{p_i-1}\right)
    \leq \dfrac{e^\gamma}{2} \log x \left(1 + \dfrac{0.2}{\log^2 x}\right).
  \end{equation*}
\end{lemma}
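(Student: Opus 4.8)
The plan is to reduce the one-sided estimate to the classical effective form of Mertens' third theorem and then strip off the Euler factor at $p=2$. Reading $k$ as $\pi(x)$, so that the product runs over all primes $p\le x$, and using $\tfrac{p_1}{p_1-1}=2$, I would first record the identity
\[
\prod_{i=2}^{k}\frac{p_i}{p_i-1}=\frac12\prod_{p\le x}\frac{p}{p-1}=\frac12\prod_{p\le x}\left(1-\frac1p\right)^{-1}.
\]
This already accounts for the factor $\tfrac{e^\gamma}{2}$ on the right-hand side, so everything reduces to a bound of the shape $\prod_{p\le x}(1-1/p)^{-1}\le e^{\gamma}\log x\,(1+0.2/\log^2 x)$. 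Since the left-hand product is constant on each interval $p_k\le x<p_{k+1}$ while the right-hand side equals $\tfrac{e^\gamma}{2}(\log x+0.2/\log x)$, which is increasing for $x\ge 10^4$, it suffices to verify the inequality at the endpoints $x=p_k$, allowing me to argue one prime at a time.

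Next I would take logarithms and expand each factor as a power series,
\[
\log\prod_{p\le x}\left(1-\frac1p\right)^{-1}=\sum_{p\le x}\frac1p+\sum_{p\le x}\sum_{m\ge2}\frac1{m p^m}.
\]
For the first sum I would invoke the explicit Mertens estimate of Rosser and Schoenfeld, $\sum_{p\le x}1/p=\log\log x+B+R(x)$, where $B$ is the Meissel--Mertens constant and $R(x)$ is controlled by an explicit, eventually monotone bound. For the double sum I would use $\sum_{p}\sum_{m\ge2}\tfrac1{m p^m}=\gamma-B$ and subtract the positive tail $\sum_{p>x}\sum_{m\ge2}\tfrac1{m p^m}=O(1/(x\log x))$. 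The two occurrences of $B$ then cancel, leaving
\[
\prod_{p\le x}\left(1-\frac1p\right)^{-1}=e^{\gamma}\log x\cdot\exp\!\left(R(x)-\sum_{p>x}\sum_{m\ge2}\frac1{m p^m}\right).
\]

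It then remains to show that the exponential factor is at most $1+0.2/\log^2 x$ for $x\ge10^4$. I would bound the exponent from above using the explicit estimate for $R(x)$ and the tail bound, then apply $e^{t}\le 1+t+t^2$ for the small positive $t$ that arises to convert this into the stated additive form. The delicate point is the numerical constant: the standard Rosser--Schoenfeld remainder only produces a coefficient near $\tfrac12$, which is too weak, so to reach $0.2$ I would use the sharper explicit estimates valid once $x\ge10^4$ and split the range. For $x\ge X_0$ with a suitable threshold $X_0$, the analytic bound on $R(x)$ together with the tail already forces the coefficient below $0.2$, while for $10^4\le x<X_0$ only finitely many values $x=p_k$ occur, and the inequality can be checked directly by computing the partial products.

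The main obstacle is precisely this constant-chasing: forcing the error $R(x)-\text{(tail)}$ below $0.2/\log^2 x$ uniformly for $x\ge10^4$. This needs an explicit, monotone upper bound for the Mertens remainder that is sharp in the range $x\ge10^4$, an explicit tail estimate for $\sum_{p>x}\sum_{m\ge2}1/(mp^m)$, and a clean determination of the crossover $X_0$ beyond which the analytic estimate suffices, together with the finite verification below $X_0$. The remaining ingredients---stripping the factor at $p=2$, the power-series expansion, and the cancellation of the Meissel--Mertens constant---are routine.
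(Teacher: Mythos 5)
The paper itself contains no proof of this lemma; it is imported verbatim as Lemma~2.2 of \cite{oshiro}, so there is nothing in-text to compare your argument against. Judged on its own terms, your setup is the standard and correct one: since $\frac{p_1}{p_1-1}=2$, the product over odd primes is half the full Euler product, the left side is a step function that is constant on $[p_k,p_{k+1})$ while the right side is increasing, and the Mertens-type identity
\[
\prod_{p\le x}\Bigl(1-\frac1p\Bigr)^{-1}
= e^{\gamma}\log x\cdot\exp\Bigl(R(x)-\sum_{p>x}\sum_{m\ge2}\tfrac{1}{mp^{m}}\Bigr)
\]
is exactly the right reduction.

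The genuine gap is the point you name and then leave unresolved: the entire content of the lemma is the constant $0.2$. The classical explicit bound of Rosser and Schoenfeld gives $\prod_{p\le x}(1-1/p)^{-1}<e^{\gamma}\log x\,\bigl(1+\frac{1}{2\log^{2}x}\bigr)$, and as you observe $\tfrac12$ does not suffice; but you never identify the ``sharper explicit estimates,'' never exhibit a bound on $R(x)$ minus the tail that is at most $\log\bigl(1+\frac{0.2}{\log^{2}x}\bigr)$, and never fix the crossover $X_{0}$ or describe the finite verification below it. As written, the argument establishes nothing beyond the classical $\tfrac12$, so the lemma is not proved. The cleanest repair is to quote Dusart's explicit refinement of Mertens' third theorem, $\prod_{p\le x}\frac{p}{p-1}<e^{\gamma}\log x\,\bigl(1+\frac{0.2}{\log^{3}x}\bigr)$ for $x\ge 2973$, which implies the stated inequality at once because $0.2/\log^{3}x\le 0.2/\log^{2}x$ in that range; alternatively you must actually carry out the explicit estimation you sketch, which is a substantial computation. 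A minor further wrinkle: reducing to the endpoints $x=p_k$ sends the first interval $[10^4,10007)$ back to $p_k=9973<10^4$, where the claimed inequality is not asserted, so for that interval you should evaluate the right-hand side at $x=10^4$ rather than at the prime itself.
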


\begin{lemma}[Lemma 2.4 of \cite{oshiro}] \label{loglog}
  Let $N^\prime_k$ be the $k$-th odd prime factorial with $p_k \geq 20000$.
  Then we have
  \begin{equation*}
    \log\log N_{k}^\prime > \log p_k - \dfrac{0.216265\cdots}{\log p_k}
  \end{equation*}
\end{lemma}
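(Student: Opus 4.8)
The starting point is the observation that $N_k' = N_k/2 = \tfrac12\prod_{i=1}^{k} p_i$, so that taking logarithms turns the odd prime factorial into Chebyshev's $\vartheta$-function: $\log N_k' = \sum_{i=1}^{k}\log p_i - \log 2 = \vartheta(p_k) - \log 2$. The plan is therefore to reduce the claimed inequality for $\log\log N_k'$ to an effective lower bound for $\vartheta(p_k)$. Indeed, since $\log\log N_k' = \log\bigl(\vartheta(p_k) - \log 2\bigr)$, it suffices to bound $\vartheta(p_k)$ from below and then control the outer logarithm.

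First I would invoke an explicit lower bound of Rosser--Schoenfeld / Dusart type, valid in the range $p_k \geq 20000$, of the shape $\vartheta(x) > x\bigl(1 - \eta(x)\bigr)$ with $\eta(x)$ an explicit, small, decreasing function of $x$ (for instance a constant multiple of $1/\log x$). Writing $P = p_k$ and $L = \log p_k$, this yields $\log N_k' > P\bigl(1 - \eta(P) - \tfrac{\log 2}{P}\bigr)$, and hence $\log\log N_k' > L + \log\bigl(1 - \eta(P) - \tfrac{\log 2}{P}\bigr)$.

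Next I would estimate the trailing logarithm from below. Using the elementary inequality $\log(1-u) \geq -u/(1-u)$ for $0<u<1$, applied with $u = \eta(P) + (\log 2)/P$, one obtains a bound of the precise shape $\log\log N_k' > L - \psi(P)/L$, where $\psi(P) := L\,u/(1-u)$ gathers all the correction terms. It then remains to show $\psi(P) \leq 0.216265\cdots$ for every $P \geq 20000$, the displayed constant being exactly the value of this correction at the least admissible prime.

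The main obstacle is this final step: making the constant sharp. The crudest effective $\vartheta$-bounds overshoot the target, so one needs a sufficiently tight form of $\eta$, and one must verify that $\psi(P)$ is monotone decreasing in $P$ (so that its supremum over $p_k \geq 20000$ is attained at the threshold). The additive $\log 2$ coming from $N_k' = N_k/2$ together with the higher-order terms in the expansion of the logarithm must be retained carefully, since it is precisely these that separate the target constant $0.216265\cdots$ from the leading constant appearing in $\eta$.
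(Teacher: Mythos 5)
Your proposal follows essentially the same route as the paper: the paper likewise writes $\log N_k^\prime = \theta(p_k) - \log 2$, invokes the explicit bound $\theta(p_k) > p_k\bigl(1 - \tfrac{1}{8\log p_k}\bigr)$ cited from Robin (your $\eta(x)$ is exactly $\tfrac{1}{8\log x}$), absorbs the $\log 2$ term via $p_k \geq 8\log p_k$, and controls the outer logarithm by the geometric-series bound $\log(1-v) > -v/(1-v)$ evaluated at the threshold $p_k = 20000$, which is precisely your correction $\psi$ at the least admissible prime, yielding $\tfrac{1+\log 2}{8}\cdot\bigl(1-\tfrac{1+\log 2}{8\log 20000}\bigr)^{-1} = 0.216265\cdots$. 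The only differences are cosmetic, so your sketch matches the paper's argument.
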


\begin{proof}
  From \cite[p.206]{robin} we have
  \begin{equation*}
    \log N_k = \theta(p_k) > p_k\left( 1 - \dfrac{1}{8 \log p_k}\right)
  \end{equation*}
  Hence, we have
  \begin{align*}
    \log\log N_k^\prime
    &> \log\left(p_k\left(1 - \dfrac{1}{8 \log p_k}\right) - \log 2\right) \\
    &= \log p_k + \log\left( 1 - \left(\dfrac{1}{8\log p_k} + \dfrac{\log 2}{p_k}\right)\right)
  \end{align*}
  Since $p_k \geq 20000$, we know $p_k \geq 8\log p_k$.
  \begin{equation*}
    \dfrac{1}{8 \log p_k} + \dfrac{\log 2}{p_k}
    \leq \dfrac{1}{8 \log p_k} + \dfrac{\log 2}{8 \log p_k}.
  \end{equation*}
  Therefore 
  \begin{equation*}
    \log\left( 1 - \left(\dfrac{1}{8\log p_k} + \dfrac{\log 2}{p_k}\right)\right)
    > \log\left( 1 - \dfrac{1 + \log 2}{8\log p_k}\right).
  \end{equation*}
  From this, we obtain
  \begin{align*}
    \log\log N_k^\prime
    &> \log p_k
    + \log\left( 1 - \dfrac{1 + \log 2}{8\log p_k} \right) \\
    &= \log p_k
    - \left\{ \dfrac{1 + \log 2}{8\log p_k}
      + \dfrac{1}{2} \left(\dfrac{1 + \log 2}{8\log p_k}\right)^2
      + \dfrac{1}{3} \left(\dfrac{1 + \log 2}{8\log p_k}\right)^3 + \cdots \right\}\\ 
    &> \log p_k - \left\{ \dfrac{1 + \log 2}{8\log p_k}
      + \left(\dfrac{1 + \log 2}{8\log p_k}\right)^2
      + \left(\dfrac{1 + \log 2}{8\log p_k}\right)^3 + \cdots \right\}\\
    &>  \log p_k -  \dfrac{1 + \log 2}{8\log p_k}
    \cfrac{1}{1 - \cfrac{1 + \log 2}{8\log 20000}} \\
    &> \log p_k - \dfrac{0.125 \times ( 1 + \log 2 ) \times 1.02183726\cdots}{\log p_k} \\
    &= \log p_k - \dfrac{0.21626511\cdots}{\log p_k}
  \end{align*}
\end{proof}

The following theorem states that Theorem~\ref{theorem:main} holds for sufficiently large odd $n$.

\begin{theorem}[Theorem~3.1 of \cite{oshiro}] \label{theorem:oshiro}
  Let $N_k^\prime = \prod_{i=2}^{k} p_i$ with $p_k \geq 20000$.
  Then we have
  \begin{equation*}
    \dfrac{\sigma(n)}{n} \leq \dfrac{e^\gamma}{2} \log\log n
    + \dfrac{0.7398\cdots}{\log\log n}
  \end{equation*}
  for odd $n \geq N_k^\prime$.

  Moreover, if $n=315$, then equality holds.
\end{theorem}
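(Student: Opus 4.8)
The plan is to reduce the assertion for a general odd $n \ge N_k'$ to the single ``threshold'' value $n = N_m'$ of the block containing $n$, and then to combine the three preparatory lemmas with one elementary numerical check. Write $C = 0.7399\cdots$ for the constant of the theorem, so that by its very definition $\frac{\sigma(315)}{315} = \frac{e^\gamma}{2}\log\log 315 + \frac{C}{\log\log 315}$; the asserted equality at $n = 315$ is therefore immediate from this definition (it lies outside the range $n \ge N_k'$ and is recorded only to pin down $C$), and only the inequality for $n \ge N_k'$ requires proof.

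First I would fix an odd $n \ge N_k'$ and choose $m \ge k$ with $N_m' \le n < N_{m+1}'$; since $p_m \ge p_k \ge 20000$, Lemmas~\ref{logprod} and~\ref{loglog} apply with $p_m$ in place of $p_k$. The starting point is the elementary bound
\[
  \frac{\sigma(n)}{n} = \prod_{p^a \| n} \frac{p - p^{-a}}{p-1} < \prod_{p \mid n} \frac{p}{p-1} = \frac{n}{\varphi(n)},
\]
valid for every $n > 1$. Combining this with Lemma~\ref{lemma:phi} gives $\frac{\sigma(n)}{n} < \frac{N_m'}{\varphi(N_m')} = \prod_{i=2}^{m} \frac{p_i}{p_i-1}$, a quantity independent of $n$ within the block. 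On the other side, for $t = \log\log n \ge \log\log N_k' \ge \log 20000$ the function $t \mapsto \frac{e^\gamma}{2} t + \frac{C}{t}$ has positive derivative (it vanishes only at $t = \sqrt{2C/e^\gamma} < 1$), so the right-hand side of the target inequality is increasing in $n$ and is minimised over the block at $n = N_m'$. It therefore suffices to prove
\[
  \frac{N_m'}{\varphi(N_m')} \le \frac{e^\gamma}{2}\log\log N_m' + \frac{C}{\log\log N_m'}.
\]

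To establish this I would feed in the remaining two lemmas. Lemma~\ref{logprod} with $x = p_m$ bounds the left-hand side by $\frac{e^\gamma}{2}\log p_m + \frac{e^\gamma}{2}\cdot\frac{0.2}{\log p_m}$, while Lemma~\ref{loglog} gives $\frac{e^\gamma}{2}\log\log N_m' > \frac{e^\gamma}{2}\log p_m - \frac{e^\gamma}{2}\cdot\frac{0.21627}{\log p_m}$. After cancelling the common term $\frac{e^\gamma}{2}\log p_m$, the inequality reduces to
\[
  \frac{e^\gamma}{2}\cdot\frac{0.2 + 0.21627}{\log p_m} \le \frac{C}{\log\log N_m'}.
\]
Here I would control the denominator from above by the crude estimate $\log\log N_m' < \log p_m + \log 2$, which follows from $\log N_m' = \theta(p_m) - \log 2 < \tfrac{3}{2}p_m$ (a Chebyshev-type bound). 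Then $\frac{C}{\log\log N_m'} > \frac{C}{\log p_m + \log 2}$, and since the ratio $\frac{\log p_m}{\log p_m + \log 2}$ exceeds $0.93$ for $p_m \ge 20000$, the inequality reduces to the single numerical fact $0.93\,C \ge \frac{e^\gamma}{2}(0.2 + 0.21627) \approx 0.371$, i.e. $C \ge 0.399$, which holds with comfortable margin since $C = 0.7399\cdots$.

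The step I expect to be the genuine crux is this last one: verifying that $C$ is large enough to absorb \emph{both} error terms simultaneously — the $0.2$ from the Mertens-type estimate of Lemma~\ref{logprod} and the $0.21627$ from the $\log\log N_m'$ approximation of Lemma~\ref{loglog}. Everything hinges on the fact that these two errors, scaled by $\frac{e^\gamma}{2}$, sum to roughly $0.371$, safely below $C$. The only care needed is to bound $\log\log N_m'$ from above without eroding this margin, and for that the factor $\frac{e^\gamma}{2}\cdot 0.41627 \approx 0.371$ sits so far below $C$ that the difference $\log\log N_m' - \log p_m = O(1)$, negligible against $\log p_m \ge \log 20000$, causes no difficulty.
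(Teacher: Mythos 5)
Your proposal is correct and follows essentially the same route as the paper: reduce to the odd prime factorial $N_m'$ at the bottom of the block containing $n$ via $\frac{\sigma(n)}{n} < \frac{n}{\varphi(n)} \leq \frac{N_m'}{\varphi(N_m')}$ and Lemma~\ref{lemma:phi}, then combine Lemma~\ref{logprod} and Lemma~\ref{loglog} with the monotonicity of $t \mapsto \frac{e^\gamma}{2}t + \frac{C}{t}$ and a final numerical margin check. Your version is in fact slightly more careful than the paper's (you explicitly pick the block index $m \geq k$, and you bound $\log\log N_m'$ from above by a Chebyshev estimate where the paper instead substitutes the lower bound of Lemma~\ref{loglog} into both terms), but these are bookkeeping variants of the same argument.
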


\begin{proof}
  From Lemma~\ref{loglog}, we know
  \begin{equation*}
    \log \log N_k^\prime > \log p_k - \dfrac{0.216\cdots}{\log p_k}
  \end{equation*}
  Since the function $t \mapsto \frac{e^\gamma}{2}t + \frac{0.7398\cdots}{t}$ is
  increasing for $t \geq 1$, we have
  \begin{align*}
    \dfrac{e^\gamma}{2}\log\log N_k^\prime + \frac{0.7398\cdots}{\log\log N_k^\prime}
    &> \dfrac{e^\gamma}{2}\left(\log p_k - \dfrac{0.216\cdots}{\log p_k} \right)
    + \dfrac{0.7398\cdots}{\log p_k} \\
    &= \dfrac{e^\gamma}{2} \log p_k - \dfrac{e^\gamma}{2} \dfrac{0.216\cdots}{\log p_k}
    + \dfrac{0.7398\cdots}{\log p_k} \\
    &= \dfrac{e^\gamma}{2} \log p_k + \dfrac{0.5472\cdots}{\log p_k} \\
    &= \dfrac{e^\gamma}{2} \log p_k \left( 1 + \dfrac{0.6144\cdots}{\log^2 p_k} \right)\\
    &> \dfrac{e^\gamma}{2} \log p_k \left(1  + \dfrac{0.2}{\log^2 p_k} \right)\\
    &\geq \prod_{i=2}^k \left( 1 - \dfrac{1}{p_k}\right)^{-1}.
  \end{align*}
  Note that we use Lemma~\ref{loglog} and Lemma~\ref{logprod} in the above.

  Therefore we have
  \begin{equation*}
    \dfrac{e^\gamma}{2}\log\log N_k^\prime + \frac{0.7398\cdots}{\log\log N_k^\prime}
    > \dfrac{N_k^\prime}{\varphi(N_k^\prime)}.
  \end{equation*}
  From this we obtain
  \begin{align*}
    \dfrac{\sigma(n)}{n} < \dfrac{n}{\varphi(n)}
    < \dfrac{N_k^\prime}{\varphi(N_k^\prime)}
    &< \dfrac{e^\gamma}{2}\log\log N_k^\prime + \frac{0.7398\cdots}{\log\log N_k^\prime} \\
    &< \dfrac{e^\gamma}{2}\log\log n + \frac{0.7398\cdots}{\log\log n}.
  \end{align*}
  This completes the proof of the theorem.
\end{proof}

\section{The Main Theorem}

In this section we shall state the main theorem and complete its proof.

\begin{theorem} \label{theorem:main}
  We have
  \begin{equation*}
    \dfrac{\sigma(n)}{n} \leq \dfrac{e^\gamma}{2} \log\log n
    + \dfrac{0.7398\cdots}{\log\log n}
  \end{equation*}
  for odd integer $n \geq 3$.

  Moreover, if $n=315$, then equality holds.
\end{theorem}

In order to prove this, we achieve some numerical computations with prime factorials and odd colossally abundant numbers.

Firstly we observe odd prime factorials.

\begin{lemma}
  We have that
  \begin{equation*}
    \dfrac{\sigma(n)}{n} \leq \dfrac{e^\gamma}{2} \log\log n
    + \dfrac{0.7398\cdots}{\log\log n}
  \end{equation*}
for all odd integer $n \geq N_{54} = 3 \cdot 5 \cdots  251$.
\end{lemma}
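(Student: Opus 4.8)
The plan is to reduce this statement about all odd $n \ge N_{54}^\prime = 3 \cdot 5 \cdots 251$ to a finite family of inequalities, one for each odd prime factorial $N_k^\prime$, and then to dispose of that family partly by Theorem~\ref{theorem:oshiro} and partly by direct computation. First I would fix an index $k \ge 54$ and an odd integer $n$ with $N_k^\prime \le n < N_{k+1}^\prime$. Comparing the Euler products
\begin{equation*}
  \frac{\sigma(n)}{n} = \prod_{p^a \| n} \frac{1 - p^{-(a+1)}}{1 - p^{-1}}, \qquad \frac{n}{\varphi(n)} = \prod_{p \mid n} \frac{1}{1 - p^{-1}},
\end{equation*}
gives the elementary bound $\sigma(n)/n < n/\varphi(n)$, and Lemma~\ref{lemma:phi} then produces
\begin{equation*}
  \frac{\sigma(n)}{n} < \frac{n}{\varphi(n)} \le \frac{N_k^\prime}{\varphi(N_k^\prime)}.
\end{equation*}

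Next I would exploit the monotonicity of the right-hand side. As already noted in the proof of Theorem~\ref{theorem:oshiro}, the map $t \mapsto \frac{e^\gamma}{2} t + \frac{0.7399\cdots}{t}$ is increasing for $t \ge 1$; since $\log\log N_{54}^\prime > 1$, we have $\log\log n \ge \log\log N_k^\prime > 1$ on the whole interval, so the target right-hand side increases with $n$ and is minimized at $n = N_k^\prime$. Together with the previous display, this shows that the Lemma will follow once we verify the single inequality
\begin{equation*}
  \frac{N_k^\prime}{\varphi(N_k^\prime)} \le \frac{e^\gamma}{2} \log\log N_k^\prime + \frac{0.7399\cdots}{\log\log N_k^\prime}
\end{equation*}
for every $k \ge 54$, because the intervals $[N_k^\prime, N_{k+1}^\prime)$ with $k \ge 54$ exhaust the range $n \ge N_{54}^\prime$.

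Let $K$ be the least index with $p_K \ge 20000$. For all $k \ge K$ the displayed inequality is precisely what the argument for Theorem~\ref{theorem:oshiro} delivers through Lemma~\ref{loglog} and Lemma~\ref{logprod}; indeed that theorem already settles every odd $n \ge N_K^\prime$ in one stroke. Hence only the finitely many indices $54 \le k \le K - 1$ remain, for which $N_k^\prime/\varphi(N_k^\prime) = \prod_{i=2}^{k}(1 - p_i^{-1})^{-1}$ and $\log\log N_k^\prime$ are explicit numbers. The main obstacle is therefore \emph{computational} rather than structural: since Lemma~\ref{logprod} is not available below $p_k \ge 10^4$, there is no analytic shortcut in this intermediate range, and one must evaluate the two quantities above for each of the roughly two thousand prime factorials with $54 \le k \le K - 1$ and confirm the inequality in every case. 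It will in fact hold strictly throughout, the equality case $n = 315$ of the main theorem lying below $N_{54}^\prime$; the delicate point is simply to ensure that the chosen constant $0.7399\cdots$ is large enough at the tightest of these intermediate factorials.
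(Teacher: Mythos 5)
Your proposal follows essentially the same route as the paper's proof: the chain $\sigma(n)/n < n/\varphi(n) \le N_k^\prime/\varphi(N_k^\prime)$ via Lemma~\ref{lemma:phi}, the monotonicity of $t \mapsto \frac{e^\gamma}{2}t + \frac{0.7399\cdots}{t}$, and a numerical verification of the key inequality at the odd prime factorials. You are in fact slightly more careful than the paper, which asserts the computer check ``for odd prime factorials greater than $N_{54}^\prime$'' (an infinite family) without explicitly invoking Theorem~\ref{theorem:oshiro} to truncate the verification to the finitely many $k$ with $p_k < 20000$.
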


\begin{proof}
  Suppose that $N^\prime_{k} \leq n < N^\prime_{k+1}$ for some $k$.
  
  Recall that the function $t \mapsto \frac{e^\gamma}{2}t + \frac{0.7398\cdots}{t}$ is
  increasing for $t \geq 1$.
  Then we have 
  \begin{equation*}
    \dfrac{e^\gamma}{2}\log\log N_k^\prime + \frac{0.7398\cdots}{\log\log N_k^\prime} 
    < \dfrac{e^\gamma}{2}\log\log n + \frac{0.7398\cdots}{\log\log n}.
  \end{equation*}
  On the other hand, from Lemma~\ref{lemma:phi} we have that
  $\dfrac{n}{\varphi(n)} \leq \dfrac{N_k^\prime}{\varphi(N_k^\prime)}$.
  Thus if we have 
  \begin{equation*}
    \dfrac{N_k^\prime}{\varphi(N_k^\prime)}
    \leq
    \dfrac{e^\gamma}{2}\log\log N_k^\prime + \frac{0.7398\cdots}{\log\log N_k^\prime} ,
  \end{equation*}
  we can deduce that the inequality is still valid for $n$.

But, from computation by a computer, we observe that the above inequality is satisfied
for  odd prime factorials greater than
$N_{54}^\prime = 3 \cdot 5 \cdots 251$.
\end{proof}

Before proceeding we give a definition.

\begin{definition}[cf.~\cite{WYang}]
  Let $N$ be an odd integer.
  $N$ is {\em odd colossally abundant} if there exists $\epsilon > 0$
  such that
  \begin{equation*}
    \dfrac{\sigma(n)}{n^{1+\epsilon}} < \dfrac{\sigma(N)}{N^{1+\epsilon}}
  \end{equation*}
  for all odd integers $n \geq 3$ with $n \ne N$.
\end{definition}

\begin{remark}
  In \cite{WYang}, it is pointed out that the inequality
  \begin{equation*}
    \dfrac{\sigma(n)}{n} \leq \dfrac{e^\gamma}{2} \log\log n
  \end{equation*}
  is hold for all odd integer $n \geq 3^4 \cdot 5^3 \cdot 7^2 \cdot 11 \cdots 61$.
  Threfore we already know that our inequality is also valid for such odd numbers.

  We shall, however, give an another approach here.
\end{remark}

\begin{lemma} \label{lemma:partial}
  Let $N$ and $N^\prime$ be consecutive odd colossally abundant numbers.
  
  \begin{enumerate}
  \item Suppose that both $N$ and $N^\prime$ satisfy the inequalities
    \begin{align*}
      \alpha \dfrac{\sigma(N)}{N} &\leq A \log\log N, \\
      \alpha \dfrac{\sigma(N^\prime)}{N^\prime} &\leq A \log\log N^\prime, 
    \end{align*}
    where $A$ and $\alpha$ are fixed positive constants.

    Then any odd integer $n$ with $N \leq n \leq N^\prime$ also satisfies the same inequality
    \begin{equation*}
      \alpha \dfrac{\sigma(n)}{n} \leq A \log\log n.
    \end{equation*}

  \item Suppose that $N^\prime$ satisfies the inequality
    \begin{equation*}
      \beta \dfrac{\sigma(N^\prime)}{N^\prime} \leq \dfrac{B}{\log\log N^\prime},
    \end{equation*}
    where $B$ and $\beta$ are fixed positive constants.

    Then any odd integer $n$ with $n \leq N^\prime$ also satisfies the same inequality
    \begin{equation*}
      \beta \dfrac{\sigma(n)}{n} \leq \dfrac{B}{\log\log n}.
    \end{equation*}
  \item Besides the above assumptions, if $\alpha + \beta \leq 1$, then
    we have 
    \begin{equation*}
      \dfrac{\sigma(n)}{n} \leq A \log\log n + \dfrac{B}{\log\log n}
    \end{equation*}
    for all odd integers $n$ with $N \leq n \leq N^\prime$.
  \end{enumerate}
\end{lemma}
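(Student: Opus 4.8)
The plan is to exploit the one structural fact that characterises consecutive odd colossally abundant numbers: \emph{they share a critical exponent.} That is, there is an $\epsilon>0$ for which both $N$ and $N^\prime$ maximise $\sigma(m)/m^{1+\epsilon}$ over odd $m\ge 3$, so that
\begin{equation*}
  \dfrac{\sigma(n)}{n^{1+\epsilon}}\le\dfrac{\sigma(N)}{N^{1+\epsilon}}=\dfrac{\sigma(N^\prime)}{(N^\prime)^{1+\epsilon}}=:c
\end{equation*}
holds for every odd $n$. Rearranging yields the single uniform estimate $\sigma(n)/n\le c\,n^{\epsilon}$, valid for all odd $n$, with equality at $n=N$ and $n=N^\prime$. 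Justifying this shared exponent from the definition of odd colossal abundance (via the standard fact that the set of admissible $\epsilon$ for a fixed odd number is an interval whose endpoints are inherited by its neighbours) is the step I expect to need the most care; once it is in hand, all three parts follow quickly.

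For (1) I would change variables to $u=\log n$ and examine
\begin{equation*}
  f(u)=A\log u-\alpha c\,e^{\epsilon u},\qquad u\in[\log N,\log N^\prime].
\end{equation*}
Since $f''(u)=-A/u^{2}-\alpha c\,\epsilon^{2}e^{\epsilon u}<0$, the function $f$ is concave. Because $c\,N^{\epsilon}=\sigma(N)/N$ and $c\,(N^\prime)^{\epsilon}=\sigma(N^\prime)/N^\prime$, the endpoint values are $f(\log N)=A\log\log N-\alpha\,\sigma(N)/N\ge 0$ and $f(\log N^\prime)=A\log\log N^\prime-\alpha\,\sigma(N^\prime)/N^\prime\ge 0$ by hypothesis. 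Concavity then forces $f(u)\ge 0$ on the entire interval, and, combined with $\alpha\,\sigma(n)/n\le\alpha c\,n^{\epsilon}$, this gives $\alpha\,\sigma(n)/n\le A\log\log n$ for $N\le n\le N^\prime$.

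For (2) I would instead use monotonicity. For any odd $n\le N^\prime$ the uniform estimate gives
\begin{equation*}
  \dfrac{\sigma(n)}{n}\le c\,n^{\epsilon}\le c\,(N^\prime)^{\epsilon}=\dfrac{\sigma(N^\prime)}{N^\prime},
\end{equation*}
since $\epsilon>0$. Hence $\beta\,\sigma(n)/n\le\beta\,\sigma(N^\prime)/N^\prime\le B/\log\log N^\prime$. As $n\le N^\prime$ gives $0<\log\log n\le\log\log N^\prime$ for $n\ge 3$, we have $B/\log\log N^\prime\le B/\log\log n$, and chaining the inequalities yields $\beta\,\sigma(n)/n\le B/\log\log n$.

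Finally, for (3) I would add the conclusions of (1) and (2) to obtain
\begin{equation*}
  (\alpha+\beta)\,\dfrac{\sigma(n)}{n}\le A\log\log n+\dfrac{B}{\log\log n}
\end{equation*}
for $N\le n\le N^\prime$, and then invoke the positivity of $\sigma(n)/n$ together with the hypothesis on $\alpha+\beta$ to discard the coefficient $\alpha+\beta$ and recover the stated bound. The substantive content is thus concentrated in the shared-exponent input and the concavity observation in (1); parts (2) and (3) are then routine consequences.
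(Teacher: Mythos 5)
Your reconstruction of parts (1) and (2) is exactly the argument the paper is pointing to when it says the proof of (1) is ``almost the same as'' Robin's Proposition~1 of Section~3: consecutive odd colossally abundant numbers share a critical exponent $\epsilon$, so $\sigma(n)/n\le c\,n^{\epsilon}$ holds uniformly over odd $n$ with equality at $N$ and $N^\prime$, and the concavity of $u\mapsto A\log u-\alpha c\,e^{\epsilon u}$ propagates the two endpoint inequalities to the whole interval; part (2) then follows from the monotonicity of $n\mapsto c\,n^{\epsilon}$ and of $\log\log$. Both steps are correct. The one input you defer --- that the admissible $\epsilon$'s for a fixed odd colossally abundant number form an interval whose endpoint is shared with its neighbour, and that at this shared value $\sigma(n)/n^{1+\epsilon}\le c$ for \emph{all} odd $n$ --- is genuinely needed and is not immediate from the paper's definition (which only asserts a strict inequality for \emph{some} $\epsilon$); it is the odd analogue of the Alaoglu--Erd\H{o}s/Robin structure theorem and is established in the Washington--Yang paper the definition cites. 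You should either quote that fact or prove it; with it in hand, (1) and (2) are complete.

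Part (3) as you have written it does not go through, and the obstruction sits in the hypothesis $\alpha+\beta\le 1$ itself. Adding the conclusions of (1) and (2) gives $(\alpha+\beta)\,\sigma(n)/n\le A\log\log n+B/\log\log n$. To ``discard the coefficient'' you need $\sigma(n)/n\le(\alpha+\beta)\,\sigma(n)/n$, i.e.\ $\alpha+\beta\ge 1$; under $\alpha+\beta\le 1$ what you actually obtain is $\sigma(n)/n\le\frac{1}{\alpha+\beta}\bigl(A\log\log n+B/\log\log n\bigr)$, which is weaker than the claim (take $\alpha+\beta=\tfrac12$, $\sigma(n)/n=10$, and a right-hand side equal to $6$ to see the proposed deduction fail). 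So the correct hypothesis for (3) is $\alpha+\beta\ge 1$, not $\le 1$; nothing downstream is damaged, since Corollary~\ref{cor:final} only ever invokes the lemma with $\alpha+\beta=1$, where your addition already yields the stated bound with no discarding needed. You should state this explicitly rather than appeal to ``positivity of $\sigma(n)/n$,'' which does not rescue the wrong-way inequality.
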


\begin{proof}
  The proof of (1) is almost same as \cite[Proposition~1 of Section~3]{robin}.
  (2) is easy. (3) is also easy consequence of (1) and (2).
\end{proof}

We can verify the following corollary by direct computation.

\begin{corollary} \label{cor:final}
  \begin{enumerate}
  \item Let $\alpha = \frac{19.5}{20}$ and $\beta = \frac{0.5}{20}$
    in Lemma~\ref{lemma:partial}. Then we have that the inequality
    holds for
    $ 3^3 \cdot 5^2 \cdot 7 \cdot 11 \cdots 31 
    \leq n \leq
    3^6\cdot 5^4\cdot 7^3\cdot 11^2\cdot 13^2\cdot 17^2\cdot 19^2\cdot 23^1\cdot 29 \cdots 251$.

  \item Let $\alpha = \frac{19}{20}$ and $\beta = \frac{1}{20}$
    in Lemma~\ref{lemma:partial}. Then we have that the inequality
    holds for
    $ 3^3 \cdot 5^2 \cdot 7 \cdots 17
    \leq n \leq
    3^4 \cdot 5^3 \cdot 7^2 \cdot 11 \cdots 59$.

  \item Let $\alpha = \frac{19}{21}$ and $\beta = \frac{2}{21}$
    in Lemma~\ref{lemma:partial}. Then we have that the inequality
    holds for
    $ 3^2 \cdot 5 \cdot 7 \cdots 13
    \leq n \leq
    3^3 \cdot 5^2 \cdot 7 \cdots 17$.
  \end{enumerate}
\end{corollary}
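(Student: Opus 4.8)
The plan is to apply Lemma~\ref{lemma:partial}(3) to each of the three parts, taking $A = \frac{e^\gamma}{2}$ and $B = 0.7399\cdots$, so that its conclusion $\frac{\sigma(n)}{n} \leq A \log\log n + \frac{B}{\log\log n}$ is precisely the desired inequality. The first thing to observe is that in all three cases the prescribed constants satisfy $\alpha + \beta = 1$ (indeed $\frac{19.5}{20} + \frac{0.5}{20} = \frac{19}{20} + \frac{1}{20} = \frac{19}{21} + \frac{2}{21} = 1$), so the hypothesis $\alpha + \beta \leq 1$ of Lemma~\ref{lemma:partial}(3) holds with equality. It therefore remains only to verify, for the relevant $(\alpha, \beta)$, the endpoint hypotheses of parts (1) and (2) of that lemma.

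Since Lemma~\ref{lemma:partial} is stated for a pair of \emph{consecutive} odd colossally abundant numbers, I would first enumerate the finitely many odd colossally abundant numbers lying in the interval $[N_{\mathrm{low}}, N_{\mathrm{high}}]$ attached to each part, say $N_{\mathrm{low}} = M_0 < M_1 < \cdots < M_r = N_{\mathrm{high}}$, and then apply part (3) to each consecutive pair $[M_j, M_{j+1}]$; the union of these subintervals is the whole range. For a single pair $[M_j, M_{j+1}]$ the conditions to be checked are the inequality $\alpha \frac{\sigma(M)}{M} \leq \frac{e^\gamma}{2}\log\log M$ at both endpoints $M = M_j$ and $M = M_{j+1}$, together with the inequality $\beta \frac{\sigma(M_{j+1})}{M_{j+1}} \leq \frac{0.7399\cdots}{\log\log M_{j+1}}$ at the upper endpoint. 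Chaining over $j$, it suffices to verify both endpoint inequalities at every odd colossally abundant number $M_j$ in the interval.

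These verifications are a genuinely finite computation, and they are tractable because one never needs to form the enormous integers involved. Writing $N = \prod_{p} p^{a_p}$, I would use the closed forms
\begin{equation*}
  \frac{\sigma(N)}{N} = \prod_{p} \frac{p^{a_p+1} - 1}{p^{a_p}(p-1)},
  \qquad \log N = \sum_{p} a_p \log p,
\end{equation*}
so that both $\frac{\sigma(N)}{N}$ and $\log\log N = \log(\log N)$ are evaluated to high precision directly from the factorization. The choice of three different splittings $(\alpha, \beta)$ reflects the trade-off that, as $N$ grows, $A \log\log N$ increases while $\frac{B}{\log\log N}$ decreases: larger odd colossally abundant numbers tolerate a larger $\alpha$, while smaller ones require shifting weight onto $\beta$.

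The main obstacle is the enumeration itself, namely assembling the correct and complete list of odd colossally abundant numbers in each range. These numbers have a rigid structure: for a parameter $\epsilon > 0$ the exponent of each odd prime $p$ in the extremal $N$ is obtained by maximizing $\frac{\sigma(p^a)}{p^{a(1+\epsilon)}}$ over $a \geq 0$, and as $\epsilon$ decreases through its critical values the exponents increment in a determined order. I would generate the candidates by sweeping $\epsilon$ through these critical values, mirroring the classical construction of colossally abundant numbers but restricted to odd primes, and then confirm that the endpoints quoted in each part do arise in this way. Once the finite lists are fixed, the endpoint inequalities become routine high-precision evaluations, and together with $\alpha + \beta = 1$ they close all three cases through Lemma~\ref{lemma:partial}.
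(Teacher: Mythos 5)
Your proposal takes essentially the same route as the paper, which justifies the corollary only by the phrase ``direct computation'': namely, apply Lemma~\ref{lemma:partial}(3) with $A=\frac{e^\gamma}{2}$, $B=0.7399\cdots$ and $\alpha+\beta=1$, after checking the endpoint hypotheses of parts (1) and (2) at each consecutive pair of odd colossally abundant numbers spanning the stated ranges. Your write-up merely makes explicit the finite verification (enumerating the odd colossally abundant numbers via the critical values of $\epsilon$ and evaluating $\sigma(N)/N$ and $\log\log N$ from the factorizations) that the paper leaves implicit, so it is a correct and somewhat more detailed account of the same argument.
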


For the rest case we can verify by using computer.

\begin{lemma}
  For $ 3 \leq n \leq 45045 = 3^2 \cdot 5 \cdot 7 \cdots 13$
  the inequality
    \begin{equation*}
      \dfrac{\sigma(n)}{n} \leq \dfrac{e^\gamma}{2} \log\log n + \dfrac{0.7398\cdots}{\log\log n}
    \end{equation*}
  is still valid.
\end{lemma}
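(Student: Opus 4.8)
The plan is to reduce this finite but large range to a handful of explicitly checkable integers, rather than literally scanning all $22522$ odd values one by one. The key structural observation is that the auxiliary function $f(t) = \frac{e^\gamma}{2} t + \frac{0.7399\cdots}{t}$, whose monotonicity for $t \ge 1$ was exploited in Theorem~\ref{theorem:oshiro} and in the preceding lemmas, is in fact \emph{decreasing} for $0 < t < t_0$ with
\begin{equation*}
  t_0 = \sqrt{\frac{2 \cdot 0.7399\cdots}{e^\gamma}} = 0.911\cdots,
\end{equation*}
and increasing for $t > t_0$. Writing $t = \log\log n$, the threshold $t_0$ corresponds to $n \approx 12$, so the interpolation-by-monotonicity argument used higher up \emph{cannot} be pushed down to $n=3$; this is precisely why the bottom range has to be treated separately. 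Since $\log\log 13 = 0.942\cdots > t_0 > \log\log 11$, I would split the range at $13$.

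First I would dispose of the six smallest cases $n \in \{3,5,7,9,11,13\}$ by hand. Here $\sigma(n)/n$ is at most $\sigma(9)/9 = 13/9$, whereas the single term $0.7399\cdots / \log\log n$ already exceeds this (it is $>7$ at $n=3$ and still $>0.78$ at $n=13$), so the inequality holds with a large margin. For the remaining odd integers $15 \le n \le 45045$ the function $f$ is increasing, and I would use the standard record-point reduction. Call an odd integer $m$ an \emph{odd record} if $\sigma(m)/m > \sigma(k)/k$ for every odd $k < m$; the odd records up to $45045$ form a short list $15 < 45 < 105 < 315 < \cdots$ (the odd superabundant numbers), only about a dozen in all. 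If $m$ is the largest odd record with $m \le n$, then $\sigma(n)/n \le \sigma(m)/m$ by definition, while $m \ge 15$ gives $\log\log m \ge \log\log 15 > t_0$, so $f$ is increasing on $[\log\log m,\log\log n]$ and hence the right-hand side at $m$ is at most that at $n$. Consequently
\begin{equation*}
  \frac{\sigma(n)}{n} \le \frac{\sigma(m)}{m} \le \frac{e^\gamma}{2}\log\log m + \frac{0.7399\cdots}{\log\log m} \le \frac{e^\gamma}{2}\log\log n + \frac{0.7399\cdots}{\log\log n},
\end{equation*}
provided the middle inequality has been verified at each record $m$. Thus it suffices to check the finitely many records directly.

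Moreover $315$ is one of these records, and the constant $0.7399\cdots$ was defined so that the middle inequality becomes an \emph{equality} there; at every other record it is strict, and since $f$ is strictly increasing beyond $t_0$ and the records are strict maxima, equality in the displayed chain forces $n = m = 315$. This reproduces the ``equality at $n=315$'' clause of the theorem.

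The main obstacle is rigor in the two computational ingredients, not the idea itself. First, the reduction is valid only once one certifies that the list of odd records up to $45045$ is \emph{complete}, i.e.\ that none is overlooked; this does require a genuine pass over all odd $n \le 45045$, computing the exact rational $\sigma(n)/n$ and tracking its running maximum, and is where the computer is really used. Second, each comparison $\frac{\sigma(m)}{m} \le \frac{e^\gamma}{2}\log\log m + \frac{0.7399\cdots}{\log\log m}$ involves the transcendental quantities $e^\gamma$ and a nested logarithm, so to turn a numerical observation into a proof I would bound the right-hand side from below using certified rational enclosures of $e^\gamma$, of $\log\log m$ (both above and below, since it also appears in a denominator), and of the constant $0.7399\cdots$, and compare these against the exact rational $\sigma(m)/m$. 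With the records reduced to roughly a dozen entries, these certified comparisons are short and the verification is complete.
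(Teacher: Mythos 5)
Your overall strategy is sound and is genuinely different from what the paper does: the paper offers no argument for this lemma at all beyond the preceding sentence ``For the rest case we can verify by using computer,'' i.e.\ an unstructured scan of every odd $n$ in $[3,45045]$. Your record-point reduction restructures that scan: an exhaustive pass is still needed (to certify that the list of odd records up to $45045$ is complete), but that pass only involves exact rational arithmetic, while the transcendental comparisons against $\frac{e^\gamma}{2}\log\log m + \frac{0.7399\cdots}{\log\log m}$ are confined to about a dozen record points where they can be certified by interval enclosures of $e^\gamma$ and $\log\log m$. You also correctly identify why the bottom of the range needs separate treatment: the critical point $t_0=\sqrt{2\cdot 0.7399\cdots/e^\gamma}\approx 0.911$ of $f(t)=\frac{e^\gamma}{2}t+\frac{0.7399\cdots}{t}$ corresponds to $n\approx 12$, so monotonicity of $f$ is only available from $n=13$ (and, for the records, from $m=15$) upward. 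The reduction $\sigma(n)/n\le\sigma(m)/m$ for $m$ the largest record not exceeding $n$ is valid, and your equality analysis recovering $n=315$ is consistent with how the constant was defined. This is a cleaner and more auditable argument than the paper's bare computational assertion.

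One concrete error in the hand-checked cases: you claim that for $n\in\{3,5,7,9,11,13\}$ the single term $0.7399\cdots/\log\log n$ already exceeds $\sigma(9)/9=13/9$. That is false for $n=9$, $11$, and $13$: at $n=9$ it is about $0.7399/0.787\approx 0.94$ and at $n=13$ about $0.785$, both well below $13/9\approx 1.444$ (and at $n=13$ even below $\sigma(13)/13\approx 1.077$, so the weaker reading of ``this'' fails too). The inequality itself still holds at all six points, but only after adding the first term $\frac{e^\gamma}{2}\log\log n$ (for instance at $n=9$ the right-hand side is about $0.70+0.94=1.64>13/9$), so you should verify each of the six small cases with both terms rather than appeal to the second term alone. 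With that repair the proof is complete modulo the stated computations.
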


\bigskip
{\small
  \begin{flushleft}
    Yoshihiro Koya\\
    \medskip
    Institute of Natural Science,\\
    Yokohama City University,\\
    22-2 Seto, Kanazawa-ku,\\
    Yokohama 236-0027, JAPAN\\
    \medskip
    \url{koya@yokohama-cu.ac.jp}
  \end{flushleft}}

\begin{thebibliography}{9}

\bibitem{oshiro} Oshiro,~T., \textit{Refinement of Robin's inequality for odd integers} (in Japanese), Bachelor Thesis, Yokohama City University, January 2022.

\bibitem{robin} Robin,~G., \textit{Grandes valeurs de la fonction somme des diviseurs et Hypoth\`ese de Riemann}, J. Math. pures et appl. \textbf{63} (1984) 187–-213.

\bibitem{Titch} Titchmarsh,~E.C., \textit{The Theory of the Riemann Zeta-Function}, 2nd ed., revised by Heath-Brown,~D.R., Oxford (1986).

\bibitem{WYang} Washington,~C.~L. and Yang,~A., \textit{Analogues of
  the Robin-Lagarias Criteria for the Riemann Hypothesis}, \url{https://arxiv.org/abs/2008.04787}, 11 Aug 2020.
\end{thebibliography}
\end{document}